\theoremstyle{definition}
\newtheorem{dfn}{Definition}
\newtheorem{thm}[dfn]{Theorem}
\newtheorem{lem}[dfn]{Lemma}
\title{Fields whose Multiplicative Groups are Linear Spaces}
\author[Y.\ Nakata]{Yuki Nakata}
\address{Faculty of Science, Kyoto University}
\email{nakata.yuuki.42x@st.kyoto-u.ac.jp}
\begin{document}
\maketitle
\begin{abstract}
The purpose of this paper is to study fields whose multiplicative groups admit
the structure of linear spaces.
We prove that the multiplicative group of a finite field is a linear space 
if and only if the order of the multiplicative group is 1, 2, or a Mersenne prime.
We give necessary conditions 
for the multiplicative group of an infinite field  
to be a linear space over another field.
We also construct an example of an infinite field 
whose multiplicative group is a linear space over $\mathbb{Q}$.
\end{abstract}

\section{Introduction and Main Results}
The additive group of a field is a linear space over the prime field.
On the contrary, satisfactory characterization of the multiplicative groups of fields has not been given \cite[pp.\,704--705]{fuchs}, \cite{may}.
The purpose of this paper is to study fields whose multiplicative groups admit
the structure of linear spaces.

In this paper, all fields are commutative.
The multiplicative group of a field $K$ is denoted by $K^{\times}$.
We know the structure of multiplicative groups of typical fields 
such as finite fields, algebraic number fields, 
algebraically closed fields, 
real closed fields,
and $p$-adic number fields.
They are not linear spaces except finite fields~\cite[pp.\,701-704]{fuchs}:
\begin{itemize}
\item
$\mathbb{F}_q^{\times}\cong\mathbb{Z}/(q-1)\mathbb{Z}$,
where $q=p^n$, $p$ is a prime, and $n\in\mathbb{Z}_{>0}$.
\item
$\mathbb{Q}^{\times}\cong
\mathbb{Z}/2\mathbb{Z}\times\mathbb{Z}^{\oplus\aleph_0}$.
\item
$\mathbb{Q}(\alpha)^{\times}\cong
\mathbb{Z}/2m\mathbb{Z}\times\mathbb{Z}^{\oplus\aleph_0}$,
where $\alpha$ is algebraic over $\mathbb{Q}$, and $m\in\mathbb{Z}_{>0}$.
\item
If $K$ is an algebraically closed field (e.g.\ $\mathbb{C}$), we have
\[
K^{\times}\cong
\begin{cases}
\mathbb{Q}/\mathbb{Z}\times\mathbb{Q}^{\oplus\mathfrak{m}}
 & (\textrm{char}\, K=0)\\
\bigoplus_{\ell\neq p}\mathbb{Z}(\ell^{\infty})\times
\mathbb{Q}^{\oplus\mathfrak{m}}
& (\textrm{char}\, K=p>0),
\end{cases}
\]
where $\mathfrak{m}$ is a cardinal, $\ell$ is a prime, and
$\mathbb{Z}(\ell^{\infty}):=\mathbb{Z}[1/\ell]/\mathbb{Z}$ is the group of type
$\ell^{\infty}$.
\item
If $K$ is a real closed field (e.g.\ $\mathbb{R}$), we have
\[
K^{\times}\cong
\mathbb{Z}/2\mathbb{Z}\times\mathbb{Q}^{\oplus\mathfrak{m}},
\]
where $\mathfrak{m}$ is a cardinal.
\item
Let $p$ be a prime, and $\mathbb{Q}_{p}$ the $p$-adic number field.
Then we have
\[\mathbb{Q}_p^{\times}\cong\begin{cases}
\mathbb{Z}\times\mathbb{Z}/2\mathbb{Z}\times\mathbb{Z}_2 & (p=2) \\
\mathbb{Z}\times\mathbb{Z}/(p-1)\mathbb{Z}\times\mathbb{Z}_p&(p>2).
\end{cases}\]
\end{itemize}

In this paper, we give 
a necessary and sufficient condition
for the multiplicative group of a finite field to be a linear space,
give necessary conditions 
for the multiplicative group of an infinite field to be a linear space,
and construct an infinite field whose multiplicative group is a linear space.

The results are as follows.

\begin{thm}
\label{fin}
Let $p$ be a prime, and $q=p^n$ a power of $p$.
Let $\mathbb{F}_q$ be a finite field with $q$ elements.
The multiplicative group $\mathbb{F}_q^{\times}$ is a linear space 
if and only if $q=2, 3$, or $q-1$ is a Mersenne prime. 
(Recall that a prime $p'$ is a Mersenne prime if $p'=2^r-1$ for an integer $r$.)
\end{thm}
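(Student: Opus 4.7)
Since $\mathbb{F}_q^{\times} \cong \mathbb{Z}/(q-1)\mathbb{Z}$ as stated in the introduction, the question reduces to asking for which $n=q-1$ the cyclic group $\mathbb{Z}/n\mathbb{Z}$ admits the structure of a linear space over some field.

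The plan is first to observe the following elementary fact about vector spaces: if $V$ is a linear space over a field $F$, then the additive group of $V$ is a direct sum of copies of $(F,+)$. If $\mathrm{char}\, F = 0$, this additive group is torsion-free, so a nontrivial finite group cannot arise; if $\mathrm{char}\, F = \ell$ for some prime $\ell$, then the additive group is an elementary abelian $\ell$-group. Therefore a finite cyclic group $\mathbb{Z}/n\mathbb{Z}$ is a linear space over some field if and only if $n=1$ (the zero space) or $n$ is prime (in which case it is a one-dimensional $\mathbb{F}_n$-space). This settles the hardest conceptual part of the proof, and conversely shows that the condition we need to characterize is simply that $q-1\in\{1\}\cup\{\text{primes}\}$.

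Next I would translate this arithmetic condition on $q-1$ into the form stated in the theorem, using $q=p^n$. The case $q-1=1$ gives $q=2$. If $q-1$ is an odd prime, then $q$ is even, forcing $p=2$ and hence $q=2^n$, so $q-1=2^n-1$ is by definition a Mersenne prime. If $q-1=2$, then $q=3$. Conversely, each of the conditions $q=2$, $q=3$, $q-1$ a Mersenne prime yields respectively $q-1=1$, $q-1=2$, or $q-1$ an odd prime, all of which fall within the admissible values above.

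The main (mild) obstacle is simply making sure the vector-space observation is invoked without ambiguity: one must allow the ground field $F$ to be arbitrary (not fixed in advance), since otherwise the statement ``$\mathbb{Z}/n\mathbb{Z}$ is a linear space'' has no content. Once the ground field is chosen as $\mathbb{F}_n$ (or the zero field case), both directions become immediate, and the arithmetic classification of prime values of $p^n-1$ completes the argument.
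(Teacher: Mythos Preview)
Your proposal is correct and follows essentially the same approach as the paper: reduce to the observation that a finite cyclic group $\mathbb{Z}/(q-1)\mathbb{Z}$ can carry a vector-space structure only over a field of positive characteristic, in which case it must be elementary abelian and hence of prime (or trivial) order, and then translate the condition ``$q-1$ is $1$ or prime'' into the stated trichotomy via the parity of $q$. Your write-up is slightly more explicit about the underlying group-theoretic fact and about the converse direction, but the argument is the same.
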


\begin{thm}
\label{infn}
Let $K$ be a field and $L$ be an infinite field.
\begin{enumerate}
\item
If the multiplicative group $L^{\times}$ is a linear space over $K$,
then the characteristic of $K$ is 0,
so that $L^{\times}$ is a linear space over $\mathbb{Q}$.
\item
If the multiplicative group $L^{\times}$ is a linear space over $\mathbb{Q}$, then
the characteristic of $L$ is 2,
and every element in $L$ except 0, 1 is transcendental over $\mathbb{F}_2$,
where we regard $\mathbb{F}_2$ as the prime field of $L$.
\end{enumerate}
\end{thm}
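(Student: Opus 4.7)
The two parts are largely independent and both reduce to standard facts about divisible torsion-free abelian groups, which is what $\mathbb{Q}$-vector spaces are.

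For part (1), I would argue by contradiction. Assume $\operatorname{char} K = p > 0$, so $\mathbb{F}_p \subseteq K$ and hence every $x \in L^\times$ satisfies $x^p = 1$ (since $p$ annihilates the $K$-module $L^\times$). Now I split on the characteristic of $L$. If $\operatorname{char} L = p$, then $x^p - 1 = (x-1)^p$ has the unique root $1$, so $L^\times = \{1\}$ and $L = \mathbb{F}_2$ (forcing $p = 2$), contradicting that $L$ is infinite. If $\operatorname{char} L \neq p$, then the polynomial $x^p - 1$ has at most $p$ roots in the field $L$, so $|L^\times| \leq p$ and again $L$ is finite, a contradiction. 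Hence $\operatorname{char} K = 0$, so $\mathbb{Q} \hookrightarrow K$ and the $K$-vector space structure on $L^\times$ restricts to a $\mathbb{Q}$-vector space structure.

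For part (2), I will use that a $\mathbb{Q}$-vector space is the same thing as a torsion-free divisible abelian group. To see $\operatorname{char} L = 2$, apply torsion-freeness to $-1 \in L^\times$: the relation $(-1)^2 = 1$ forces $-1 = 1$, i.e., $\operatorname{char} L = 2$. For the transcendence statement, suppose $\alpha \in L \setminus \{0,1\}$ were algebraic over the prime field $\mathbb{F}_2$; then $\mathbb{F}_2(\alpha)$ would be a finite extension of $\mathbb{F}_2$ and hence a finite field, so $\alpha$ would satisfy $\alpha^n = 1$ for some $n > 0$, making $\alpha$ a nontrivial torsion element of $L^\times$. This again contradicts torsion-freeness, so $\alpha$ must be transcendental over $\mathbb{F}_2$.

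\textbf{Main obstacle.} There is no real obstacle here; both statements are short once one recognizes the $\mathbb{Q}$-vector space condition as ``divisible plus torsion-free''. The only point requiring mild care is the bifurcation in part (1) between $\operatorname{char} L = p$ and $\operatorname{char} L \neq p$, since in the former case the polynomial $x^p - 1$ is inseparable and one cannot directly bound the number of roots by the degree in the naive way.
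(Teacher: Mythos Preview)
Your argument is correct and matches the paper's approach: the paper likewise deduces $a^p=1$ from $\operatorname{char} K = p$ in part~(1), and for part~(2) it uses torsion-freeness of $-1$ (packaged as a short preliminary lemma via a basis expansion) together with the same finite-field argument for the transcendence claim. Your stated ``obstacle'' is a non-issue, however: a polynomial of degree $p$ over any field has at most $p$ roots regardless of separability, so your case split on $\operatorname{char} L$ in part~(1) is unnecessary---the paper simply observes that $T^p-1$ has finitely many roots and concludes.
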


There exists an infinite field $L$ whose multiplicative group is a linear space over 
$\mathbb{Q}$.

\begin{thm}
\label{infc}
Let $\mathbb{F}_2((x))$ be the power series field (Laurent expansion field)
over $\mathbb{F}_2$.
Take a sequence $\{x^{1/n}\}_{n\in\mathbb{Z}_{>0}}$ 
of elements in an algebraic closure of $\mathbb{F}_2((x))$
satisfying
$(x^{1/mn})^m=x^{1/n}$ for all $m,n\in\mathbb{Z}_{>0}$ and $x^{1/1}=x^1=x$.
Let
\[
L_0:=\bigcup_{n=1}^{\infty}\mathbb{F}_2((x))(x^{1/n})
=\bigcup_{n=1}^{\infty}\mathbb{F}_2((x^{1/n}))
\]
be the extension of $\mathbb{F}_2((x))$ generated by $x^{1/n}$ for all $n\in\mathbb{Z}_{>0}$.
Then
the multiplicative group $L_0^{\times}$ is a linear space over $\mathbb{Q}$.
\end{thm}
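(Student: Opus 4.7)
The plan is to show that $L_0^{\times}$ is both torsion-free and divisible, since these two properties together characterize $\mathbb{Q}$-vector spaces among abelian groups. The main tool is the $\mathbb{Q}$-valued valuation $v$ on $L_0$ obtained by extending the $x$-adic valuation of $\mathbb{F}_2((x))$; its valuation ring is $\mathcal{O}=\bigcup_n\mathbb{F}_2[[x^{1/n}]]$, with maximal ideal $\mathfrak{m}$ and residue field $\mathbb{F}_2$. Consequently any $a\in L_0^{\times}$ factors as $a=x^{r}u$ with $r\in\mathbb{Q}$ and $u$ a principal unit ($u\equiv 1\pmod{\mathfrak{m}}$).

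For torsion-freeness, from $a^m=1$ the valuation gives $r=0$, and the residue field being $\mathbb{F}_2$ forces $a=1+b$ with $v(b)>0$. Writing $m=2^s m'$ with $m'$ odd, the characteristic-$2$ identity $(y-1)^{2^s}=y^{2^s}-1$ reduces the problem to $a^{m'}=1$; expanding $(1+b)^{m'}$ and using $m'b=b$ shows the leading valuation term is $b$ itself, which forces $b=0$.

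Divisibility is the substantive step. Given $a\in\mathbb{F}_2((x^{1/n}))^{\times}$ and $m=2^s m'$, write $a=x^{k/n}u$. The element $x^{k/(mn)}\in\mathbb{F}_2((x^{1/(mn)}))$ provides an $m$-th root of the monomial part, so it remains to extract an $m$-th root of $u$. I would do this in two stages. First, Hensel's lemma applied to $T^{m'}-u$ at the approximate root $T=1$ (legitimate since $f'(1)=m'$ is a unit in $\mathbb{F}_2$) produces a principal unit $u_1\in\mathbb{F}_2[[x^{1/n}]]$ with $u_1^{m'}=u$. Second, expanding $u_1=\sum_{i\geq 0} c_i x^{i/n}$ with $c_i\in\mathbb{F}_2$, the explicit Frobenius recipe $u_2:=\sum_{i\geq 0} c_i x^{i/(2^s n)}\in\mathbb{F}_2((x^{1/(2^s n)}))$ satisfies $u_2^{2^s}=\sum c_i^{2^s}x^{i/n}=u_1$, and hence $u_2^m=u$. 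The product $x^{k/(mn)}u_2$ is then an $m$-th root of $a$ in $L_0$.

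The $2$-power case of divisibility is the main obstacle, and it is precisely what forces the ascending union structure of $L_0$: no individual $\mathbb{F}_2((x^{1/n}))$ is $2$-divisible, because extracting a $2^s$-th root of a typical unit genuinely enlarges the denominator of the exponent of $x$. Characteristic $2$ is equally essential throughout, as it eliminates the usual $-1$ obstruction to $2$-divisibility in $L^{\times}$ for any field $L$ and legitimizes the Frobenius root-extraction via $c_i^{2^s}=c_i$. The odd part, by contrast, is a routine application of Hensel's lemma in the complete discrete valuation ring $\mathbb{F}_2[[x^{1/n}]]$.
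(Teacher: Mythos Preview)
Your proof is correct and follows essentially the same route as the paper: decompose each element via the $\mathbb{Q}$-valued valuation as $x^{r}u$ with $u$ a principal unit, handle odd-order divisibility by Hensel's lemma in $\mathbb{F}_2[[x^{1/n}]]$, and handle $2$-power divisibility by the explicit Frobenius inverse that halves all exponents. The only organizational difference is that the paper first isolates the principal-unit group $R^{\times}=\bigcup_n\mathbb{F}_2[[x^{1/n}]]^{\times}$, proves it is a $\mathbb{Q}$-vector space by showing each $(-)^k$ is a bijection, and then reads off $L_0^{\times}\cong\mathbb{Q}\times R^{\times}$; you instead argue torsion-freeness and divisibility for $L_0^{\times}$ directly, with a short valuation argument replacing the paper's appeal to uniqueness in Hensel for the torsion-free part.
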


We prove these theorems in the following sections.

\section{Proof of Theorem \ref{fin}}
\label{s_fin}
Obviously,
$\mathbb{F}_2^{\times}=\{1\}$ is the group with order 1, 
which is a linear space of dimension 0 over any field.

Assume $q\geq 3$.
Then
$\mathbb{F}_q^{\times}$ is a finite group whose order is larger than 1,
which cannot be a linear space over $\mathbb{Q}$.
Therefore $\mathbb{F}_q^{\times}\cong\mathbb{Z}/(q-1)\mathbb{Z}$ is a linear space over
$\mathbb{F}_{p'}$ ($p'$ is a prime) if and only if
\[
\mathbb{Z}/(q-1)\mathbb{Z}\cong(\mathbb{Z}/p'\mathbb{Z})^{\oplus m}
\]
for some $m\in\mathbb{Z}_{>0}$.
Since $\mathbb{Z}/(q-1)\mathbb{Z}$ is cyclic, we have $m=1$.
Thus we have $q-1=p'$.

If $q$ is even, then $q=2^n$ for some $n$.  
Hence $q-1$ is a Mersenne prime.

If $q$ is odd, then $q-1=2$.

\section{Proof of Theorem \ref{infn}}
\label{s_infn}

\begin{lem}
\label{main2}
Let $K,L$ be fields.
Assume that $L^{\times}$ is a linear space over $K$.
Then the characteristic of $K$ or $L$ is 2.
\end{lem}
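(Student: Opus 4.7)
The plan is to exploit the element $-1 \in L^\times$, which always satisfies $(-1)^2 = 1$ and therefore has multiplicative order $1$ or $2$. The hypothesis that $L^\times$ carries a $K$-module structure puts strong restrictions on torsion, and looking at $-1$ will let us pin down one of the two characteristics.

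First I would reduce to the prime field: since $L^\times$ is a $K$-vector space, it is in particular a module over the prime subfield $k \subseteq K$, so $k = \mathbb{Q}$ or $k = \mathbb{F}_{p'}$ for some prime $p'$, and it suffices to show $p' = 2$ or $\operatorname{char} L = 2$. I would then split into two cases according to $\operatorname{char} K$.

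In the case $\operatorname{char} K = 0$, the group $L^\times$ is a $\mathbb{Q}$-vector space and hence torsion-free. Assume for contradiction that $\operatorname{char} L \neq 2$. Then $-1 \neq 1$ in $L$, so $-1$ is a nontrivial element of $L^\times$ of order $2$, contradicting torsion-freeness. (Equivalently, if $\operatorname{char} L = p > 2$ then $\mathbb{F}_p^\times \subseteq L^\times$ already supplies nontrivial torsion.) Hence $\operatorname{char} L = 2$. In the case $\operatorname{char} K = p' > 0$, the group $L^\times$ is an $\mathbb{F}_{p'}$-vector space, so every element of $L^\times$ has order dividing $p'$. Assume $\operatorname{char} L \neq 2$; then $-1$ is an element of $L^\times$ of order exactly $2$, so $2 \mid p'$, forcing $p' = 2$, i.e.\ $\operatorname{char} K = 2$.

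There is really no main obstacle here: the whole argument rests on the single observation that $-1$ has order dividing $2$, combined with the structure theorem for vector spaces over the prime field. The only thing to be careful about is not to conflate $\operatorname{char} K$ and $\operatorname{char} L$; the lemma does not say they are both $2$, only that at least one of them is.
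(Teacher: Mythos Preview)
Your proof is correct and takes essentially the same approach as the paper: both hinge on the fact that $-1 \in L^{\times}$ has multiplicative order at most $2$, and this forces a constraint on the vector-space structure. The paper's version is slightly more direct---assuming $\operatorname{char} L \neq 2$, it writes $-1$ in terms of a basis and squares to obtain $2r_i = 0$ in $K$, concluding $\operatorname{char} K = 2$ without any case split on $\operatorname{char} K$ or passage to the prime subfield---but the substance is the same.
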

\begin{proof}
Let us express a scalar product as an exponent like 
$a^r$ for $a\in L^{\times}$ and $r\in K$
for the compatibility with the multiplicative notations.

Let $S\subset L^{\times}$ be a basis of $L^{\times}$ as a linear space over $K$.
Assume that the characteristic of $L$ is not 2.
Then we have $-1\neq 1$ in $L$.
There exist
$k\in\mathbb{Z}_{>0}$, 
$a_1,\ldots, a_k\in S$, and $r_1,\ldots,r_k\in K\setminus\{0\}$ such that
$a_1^{r_1}\cdots a_k^{r_k}=-1$.
Then we have
\[
1=(-1)^2=a_1^{2r_1}\cdots a_k^{2r_k}.
\]
Since the elements of the basis are linearly independent,
we have $2r_1=\cdots=2r_k=0$.
Hence the characteristic of $K$ is 2.
(Note that 1 is the zero vector in the linear space $L^{\times}$.)
\end{proof}

We return to the proof of Theorem \ref{infn}.

(1)
Assume that the characteristic of $K$ is $p>0$.
Then we have $a^p=1$ for all $a\in L^{\times}$.
Thus every $a\in L^{\times}$ is a root of the polynomial $T^p-1$,
which implies $L^{\times}$ is a finite group, 
contradicting the assumption that $L$ is an infinite field.

(2)
By Lemma \ref{main2},
the characteristic of $L$ is 2.
Assume that $a\in L\setminus \{0,1\}$ is algebraic over $\mathbb{F}_2$.
Then $\mathbb{F}_2(a)$ is a finite field, 
and $\mathbb{F}_2(a)^{\times}$ is a finite group.
Then there exists a positive integer $n$ such that $a^n=1$, 
contradicting the assumption that 
$L^{\times}$ is a linear space over $\mathbb{Q}$.

\section{Proof of Theorem \ref{infc}}
\label{s_infc}

\begin{lem}
\label{root}
Let $\mathbb{F}_2[[x]]$ be the formal power series ring over $\mathbb{F}_2$.
The map 
\[
(-)^k\colon \mathbb{F}_2[[x]]^{\times}\to\mathbb{F}_2[[x]]^{\times},\ 
a(x)\mapsto a(x)^k\]
is a group isomorphism
for any positive odd integer $k$.
\end{lem}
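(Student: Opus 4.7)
The plan is to show $(-)^k$ is bijective by exploiting the fact that in characteristic $2$ an odd integer $k$ reduces to $1$, so the lowest-order coefficient of $a(x)^k$ behaves nicely. First I would recall that a power series in $\mathbb{F}_2[[x]]$ is a unit iff its constant term is $1$, so $\mathbb{F}_2[[x]]^\times = 1 + x\mathbb{F}_2[[x]]$; the group is abelian, so $(-)^k$ is automatically a group homomorphism, and it suffices to prove injectivity and surjectivity.

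For injectivity, take $a(x) \neq 1$ in $\mathbb{F}_2[[x]]^\times$, and let $n \geq 1$ be the minimal index with $a_n \neq 0$, so that $a(x) = 1 + a_n x^n + (\text{higher order})$. Expanding by the binomial theorem, $a(x)^k = 1 + k a_n x^n + (\text{higher order}) = 1 + a_n x^n + (\text{higher order})$ since $k$ is odd in characteristic $2$; hence $a(x)^k \neq 1$.

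For surjectivity, given $b(x) = 1 + \sum_{n \geq 1} b_n x^n$, I would construct a preimage $a(x) = 1 + \sum_{n \geq 1} a_n x^n$ by determining its coefficients one at a time. The key observation is that once $a_1, \dots, a_{n-1}$ are fixed, the coefficient of $x^n$ in $a(x)^k$ has the form $k a_n + P_n(a_1, \dots, a_{n-1}) = a_n + P_n(a_1, \dots, a_{n-1})$ for some polynomial $P_n$ over $\mathbb{F}_2$; contributions involving higher $a_j$ ($j > n$) produce monomials of degree $> n$ and so do not appear. Setting $a_n := b_n + P_n(a_1, \dots, a_{n-1})$ matches the $x^n$-coefficient and extends the agreement one more degree, producing $a(x)$ with $a(x)^k = b(x)$.

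The only point needing careful justification is the linear dependence of the $x^n$-coefficient of $a(x)^k$ on $a_n$ with leading coefficient $k$; this follows from the multinomial expansion, since any occurrence of the factor $a_n x^n$ from exactly one of the $k$ copies of $a(x)$ contributes $a_n x^n$ to the degree-$n$ part, summing to $k a_n x^n$, while every other contribution to degree $n$ involves only $a_1, \dots, a_{n-1}$. I do not anticipate any other obstacle; an alternative route would be to observe that each quotient $\mathbb{F}_2[[x]]^\times / (1 + x^n\mathbb{F}_2[[x]])$ is a finite $2$-group of order $2^{n-1}$ on which $(-)^k$ is bijective (since $\gcd(k, 2^{n-1}) = 1$), and then pass to the inverse limit.
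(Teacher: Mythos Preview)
Your proposal is correct, and it takes a genuinely different route from the paper. The paper invokes Hensel's Lemma: given $a(x)\in\mathbb{F}_2[[x]]^\times$, it considers $F(T)=T^k-a(x)$ over the complete local ring $(\mathbb{F}_2[[x]],x\mathbb{F}_2[[x]])$, factors the reduction $\overline{F}(T)=T^k-1=(T-1)(T^{k-1}+\cdots+1)$ in $\mathbb{F}_2[T]$ (the two factors are coprime precisely because $k$ is odd), lifts this by Hensel to $F(T)=(T-b(x))H(T)$, and then checks that $H$ has no root in $\mathbb{F}_2[[x]]$ to conclude that $b(x)$ is the unique $k$-th root of $a(x)$. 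Your argument is more elementary: you isolate the single arithmetic fact that matters, namely $k\equiv 1\pmod 2$, and solve $a(x)^k=b(x)$ directly coefficient by coefficient, which in effect unpacks the inductive content of Hensel's Lemma in this special case. Your alternative via the pro-$2$ structure of $\mathbb{F}_2[[x]]^\times\cong\varprojlim_n \mathbb{F}_2[[x]]^\times/(1+x^n\mathbb{F}_2[[x]])$ is slicker still and makes the role of $\gcd(k,2)=1$ completely transparent. The paper's approach has the advantage of citing a standard theorem and generalizing immediately to other complete local rings; yours is self-contained and arguably better exposes why oddness of $k$ is exactly the right hypothesis.
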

\begin{proof}
The map
$(-)^k$ is a well-defined group homomorphism as the multiplication is commutative.
We shall prove that for any $a(x)\in\mathbb{F}_2[[x]]^{\times}$,
there exists a unique $b(x)\in\mathbb{F}_2[[x]]^{\times}$ such that
$a(x)=b(x)^k$.
Note that
$(\mathbb{F}_2[[x]],\ x\mathbb{F}_2[[x]])$ is a complete local ring.
For a polynomial 
\[
P(T)=a_0(x)+a_1(x)T+\cdots +a_m(x)T^m\in\mathbb{F}_2[[x]][T]\]
with $a_i(x)\in\mathbb{F}_2[[x]]$
for all $i\geq 0$,
we put
\[\overline{P}(T)=a_0(0)+a_1(0)T+\cdots +a_m(0)T^m
\in\mathbb{F}_2[T].\]

Take an element $a(x)\in\mathbb{F}_2[[x]]^{\times}$.
Consider the polynomial 
\[F(T):=T^k-a(x)\in\mathbb{F}_2[[x]][T].\]
Since $a(0)\in\mathbb{F}_2^{\times}=\{1\}$, we have
\[\overline{F}(T)=T^k-a(0)=(T-1)(T^{k-1}+\cdots +1).\] 
Since $k$ is odd, the polynomials
$T-1$ and $T^{k-1}+\cdots +1\in\mathbb{F}_2[T]$ are relatively prime.
By Hensel's Lemma \cite[Theorem 8.3]{matsu}, we have 
\[F(T)=(T-b(x))H(T)\] 
for some $b(x)\in\mathbb{F}_2[[x]]$ and $H(T)\in\mathbb{F}_2[[x]][T]$
satisfying $b(0)=1$ and
$\overline{H}(T)=T^{k-1}+\cdots +1$.

We shall prove $H(T)$ has no root in $\mathbb{F}_2[[x]]$.
Assume that $H(T)$ has a root in $\mathbb{F}_2[[x]]$.
Then we have $H(c(x))=0$ for some $c(x)\in\mathbb{F}_2[[x]]$. 
Putting $x=0$, we have
$\overline{H}(c(0))=0$,
which contradicts $\overline{H}(0)=\overline{H}(1)=1$
as $k$ is odd.

Therefore the factorization $T^k-a(x)=(T-b(x))H(T)$
implies that $b(x)$ is a unique $k$-th root of $a(x)$ in $\mathbb{F}_2[[x]]^{\times}$.
\end{proof}

In the following, 
we take a sequence $\{x^{1/n}\}_{n\in\mathbb{Z}_{>0}}$ as in Theorem \ref{infc}.
We consider the ring
\[R:=\displaystyle\bigcup_{n=1}^{\infty}\mathbb{F}_2[[x^{1/n}]]\subset L_0.\]

\begin{lem}
\label{ring}
The unit group
\[
R^{\times}=\bigcup_{n=1}^{\infty}\mathbb{F}_2[[x^{1/n}]]^{\times}
\]
is a linear space over $\mathbb{Q}$.
\end{lem}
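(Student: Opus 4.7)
The plan is to show $R^\times$, viewed as a multiplicatively written abelian group, is both divisible and torsion-free; the standard fact that divisible torsion-free abelian groups carry a canonical $\mathbb{Q}$-vector space structure (with $m/k\in\mathbb{Q}$ acting on $a$ as the unique $k$-th root of $a^m$) will then give the conclusion.

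For divisibility, given $a\in R^\times$ and $k\in\mathbb{Z}_{>0}$, I would fix $n$ with $a\in\mathbb{F}_2[[x^{1/n}]]^\times$ and factor $k=2^sm$ with $m$ odd. Lemma \ref{root}, applied to $\mathbb{F}_2[[x^{1/n}]]$ (which is a formal power series ring over $\mathbb{F}_2$ in the variable $x^{1/n}$), produces $c\in\mathbb{F}_2[[x^{1/n}]]^\times$ with $c^m=a$. To extract a $2^s$-th root of $c$, I would pass up to $\mathbb{F}_2[[x^{1/(2^sn)}]]\subset R$ and invoke Frobenius in characteristic $2$: writing $c=\sum_i c_i(x^{1/n})^i$ with $c_i\in\mathbb{F}_2$ and using $c_i^{2^s}=c_i$,
\[
c=\sum_i c_i\bigl(x^{1/(2^sn)}\bigr)^{2^si}
=\Bigl(\sum_i c_i\bigl(x^{1/(2^sn)}\bigr)^i\Bigr)^{2^s}.
\]
Setting $b:=\sum_i c_i(x^{1/(2^sn)})^i\in\mathbb{F}_2[[x^{1/(2^sn)}]]^\times\subset R^\times$ gives $b^k=b^{2^sm}=c^m=a$, as needed.

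For torsion-freeness, suppose $a\in R^\times$ satisfies $a^k=1$ with $k=2^sm$, $m$ odd. Lemma \ref{root} says the $m$-th power map on the $\mathbb{F}_2[[x^{1/n}]]^\times$ containing $a$ is an isomorphism, hence injective, so $(a^{2^s})^m=1^m$ forces $a^{2^s}=1$. In characteristic $2$ the equation $a^2=1$ rewrites as $(a+1)^2=0$, and since $R\subset L_0$ is a domain this gives $a=1$; iterating $s$ times yields $a=1$. Hence $R^\times$ is torsion-free, and combined with divisibility the $\mathbb{Q}$-vector space structure is automatic.

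The only real obstacle is divisibility by $2$: Lemma \ref{root} explicitly requires $k$ odd, and in a single ring $\mathbb{F}_2[[y]]^\times$ an element like $1+y$ genuinely fails to be a square. The union defining $R$ is what rescues the argument, because passing from $y$ to $y^{1/2}$ turns Frobenius into a source of square roots; this is precisely why the tower $\{x^{1/n}\}_n$ is built into the definition of $L_0$.
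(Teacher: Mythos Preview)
Your proof is correct and follows essentially the same strategy as the paper: handle odd $k$ via Lemma \ref{root} and $k=2$ via Frobenius together with the tower of roots $x^{1/n}$. The only cosmetic difference is that the paper packages divisibility and torsion-freeness together as the single assertion that $(-)^k\colon R^\times\to R^\times$ is a group isomorphism for every $k\ge 1$, whereas you treat the two halves separately.
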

\begin{proof}
It suffices to prove that 
\[(-)^k\colon R^{\times}\to R^{\times},\ a(x)\mapsto a(x)^k\] is a group isomorphism
for any positive integer $k$.
It is enough to prove the assertion in the case of odd $k$ and $k=2$ separately.

If $k$ is odd, the map
\[(-)^k\colon\mathbb{F}_2[[x^{1/n}]]^{\times}\to\mathbb{F}_2[[x^{1/n}]]^{\times},\ 
a(x)\mapsto a(x)^k\] 
is a group isomorphism
by Lemma \ref{root}.
Hence
the extension map $(-)^k\colon R^{\times}\to R^{\times}$ to the union
is also a group isomorphism.

If $k=2$, the map $(-)^2$ is the restriction of the Frobenius map 
$L_0\to L_0$ sending $a(x)$ to $a(x)^2$. 
Then we see that this map is injective. 
This map is surjective because
\[
(-)^{1/2}\colon R^{\times}\to R^{\times}\] 
defined by 
\[(1+c_{1/n}x^{1/n}+c_{2/n}x^{2/n}+\cdots)\ \mapsto\  
(1+c_{1/n}x^{1/2n}+c_{2/n}x^{2/2n}+\cdots)
\]
is the inverse map of $(-)^2$.
\end{proof}

Finally we return to the proof of Theorem \ref{infc}.
Note that $\mathbb{F}_2[[x^{1/n}]]$ is a discrete valuation ring with uniformizer $x^{1/n}$. 
Thus we have
\[
\mathbb{F}_2((x^{1/n}))^{\times}
=\coprod_{k=-\infty}^{\infty}
x^{k/n}\mathbb{F}_2[[x^{1/n}]]^{\times}.
\]
Therefore we have
\begin{align*}
L_0^{\times}&=\bigcup_{n=1}^{\infty}\left(\coprod_{k=-\infty}^{\infty}
x^{k/n}\mathbb{F}_2[[x^{1/n}]]^{\times}\right)\\
&=\coprod_{\alpha\in\mathbb{Q}}x^{\alpha}\left(\bigcup_{n=1}^{\infty}
\mathbb{F}_2[[x^{1/n}]]^{\times}\right)\\
&=\coprod_{\alpha\in\mathbb{Q}}x^{\alpha}R^{\times}.
\end{align*}
Here we put $x^{\alpha}R^{\times}:=\{x^{\alpha}a(x)\ |\ a(x)\in R^{\times}\}$.
Thus we have an isomorphism 
\[
L_0^{\times}\cong\mathbb{Q}\times R^{\times}.
\]
Since $R^{\times}$ is a linear space over $\mathbb{Q}$ by Lemma \ref{ring},
we conclude that $L_0^{\times}$ is a linear space over $\mathbb{Q}$.

\subsection*{Acknowledgements}
The content of this paper is based on the author's presentation 
at the 8th Science Intercollegiate Contest
held on March 2-3, 2019, hosted by 
Ministry of Education, Culture, Sports, Science and Technology (MEXT) of Japan.
The author would like to thank the organizers of the contest 
for providing the opportunity to present the author's research.

\end{document}